\protected\def\p@subfigure{\thefigure}
\numberwithin{equation}{section}
\theoremstyle{plain}
\newtheorem{theorem}{Theorem}
\newtheorem{lemma}[theorem]{Lemma}
\newtheorem{proposition}[theorem]{Proposition}
\theoremstyle{definition}
\theoremstyle{remark}
\newcommand{%
    
    \import{./Figures/}{.pdf_tex}
}[1]{%
    
    \import{./Figures/}{#1.pdf_tex}
}
\newcommand{\be}{\begin{equation}}
\newcommand{\ee}{\end{equation}}
\newcommand{\LF}[1]{{LF}\left({#1}\right)}
\begin{document}

\title[Two-sided Guionnet-Jones-Shlyakhtenko construction]{On the two-sided Guionnet-Jones-Shlyakhtenko construction at level three}
\author{R Jayakumar}
\address{The Institute of Mathematical Sciences,
Chennai and
Homi Bhabha National Institute\\
Mumbai}
\date{\today}

\begin{abstract}
We study the two-sided Guionnet-Jones-Shlyakhtenko construction applied to the group planar algebra $P(\mathcal{G})$ of a finite non-trivial group $\mathcal{G}$. This produces a sequence of von Neumann algebras $M^k$ for $k \geq 0$ with no natural inclusions.
Focusing on level $k=3$, we show that the resulting von Neumann algebra $M^3$ is isomorphic to the interpolated free group factor $\LF{1+\frac{2(n-1)}{n^2}}$, where $n=|\mathcal{G}|$. 
Our approach keeps the combinatorics explicit and relies on standard tools from free probability and planar algebras.
\end{abstract}

\subjclass[2020]{46L37, 46L54}
\keywords{two-sided GJS construction, planar algebras, free probability, free Poisson elements, interpolated free group factors}

\maketitle

\section{Introduction}

The two-sided Guionnet-Jones-Shlyakhtenko (GJS) construction - see \cite{JayakumarKodiyalam2025} - builds tracial von Neumann algebras $M^k$ for $k \geq 0$ from an irreducible subfactor planar algebra $P$ with modulus $\delta>1$. When $P=P(\mathcal{G})$ is the group planar algebra of a finite non-trivial group $\mathcal{G}$, the first two algebras $M^1$ and $M^2$ are already known to be interpolated free group factors as shown in  \cite{JayakumarKodiyalam2025}.

Our focus in this paper is $M^3$. We prove that $M^3$ is also an interpolated free group factor and give a clean, self-contained proof using planar algebra techniques combined with free probability methods.

The key insight is the identification of a specific element $Z \in P_{(5,+)}$ that  together with $P_{(3,+)}$ (which is just $M_n(\mathbb{C})$) generates the dense $*$-algebra $A_G^3$ of $M^3$. We will see that $Z$ corresponds to a free Poisson element, thereby deducing the structure of $A_G^3$.
Working inside the two-sided GJS construction with planar algebra diagrams makes the freeness between $Z$ and $P_{(3,+)}$ completely transparent, avoiding complications that occur while determining $M^1$ and $M^2$.
From this perspective, one immediately sees that $M^3$ coincides with the matrix-amplified algebra $M_n(M^1)$ where $n=|{\mathcal G}|$. 

The paper is organised as follows.
\begin{itemize}
  \item Section~\ref{sec:prelim} reviews the background on non-crossing partitions, Möbius inversion, free Poisson elements, and the structure of group planar algebras.
  \item Section~\ref{sec:gjs-setup} recalls the two-sided GJS construction, records the filtered algebra $A_F^k$ and the graded algebra $A_G^k$, and notes the trace-preserving isomorphism between them, which is essential for defining the resulting von Neumann algebras $M^k$.
  \item Section~\ref{sec:det-M3} contains the main result: we show that $Z$ and $P_{(3,+)}$ generate $A_G^3$, prove that they are free, and conclude that $M^3 \cong \LF{1+\frac{2(n-1)}{n^2}}$.
\end{itemize}

\section{Preliminaries}\label{sec:prelim}

We collect the free probability background and the group planar algebra facts used in Section~\ref{sec:det-M3}. For the convenience of the reader, we briefly recall the key concepts, assuming familiarity with basic planar algebra and free probability theory. Our exposition follows the methodology and techniques of \cite{JayakumarKodiyalam2025}. For free probability theory we rely on \cite{NicaSpeicher2006} and \cite{VoiculescuDykemaNica1992}, while \cite{Jones2021} and \cite{De2016} cover the planar algebra background.

\subsection{Tracial non-commutative probability spaces}

A pair $(A,\varphi)$ consisting of a unital algebra $A$ and a linear functional $\varphi\colon A\to\mathbb{C}$ with $\varphi(1)=1$ is called an algebraic non-commutative probability space (NCPS). If $A$ is a unital $*$-algebra and $\varphi$ is positive, that is, $\varphi(a^*a)\geq 0$ for all $a\in A$, we obtain a $*$-NCPS; if, moreover, $A$ is a von Neumann algebra and $\varphi$ is a faithful normal state, we speak of a von Neumann NCPS.

Throughout this paper we work with tracial $*$-NCPS, namely $\varphi(ab)=\varphi(ba)$ for all $a,b\in A$. The elements of $A$ are referred to as non-commutative random variables.

A family of unital $*$-subalgebras $\{A_i : i\in I\}$ is \emph{free} if $\varphi(a_1\cdots a_q)=0$ whenever $q\geq 1$, $a_j\in A_{i_j}$, $\varphi(a_j)=0$, and adjacent indices satisfy $i_j\neq i_{j+1}$. 
A family of random variables $\{x_i : i\in I\}\subset A$ is free (resp.~$*$-free) when the unital (resp.~unital $*$-)subalgebras they generate form a free family. 
Freeness is preserved by taking von Neumann closures: if $(M,\tau)$ is a tracial von Neumann algebra and $\{B_i\}$ is a family of unital $*$-subalgebras, then $\{B_i\}$ is free if and only if $\{B_i''\}$ is free \cite[Lemma~2.5.7]{NicaSpeicher2006}.

\subsection{Free products.}\;Given a family of tracial NCPS's $(A_i,\varphi_i)_{i\in I}$ their \emph{free product}
\[
  (A,\varphi)=*_{i\in I}(A_i,\varphi_i)
\]
is the unique tracial NCPS characterised by the following two conditions:
\begin{enumerate}
  \item $A$ is the algebraic free product of the unital algebras $A_i$ amalgamated over the common unit;
  \item the state $\varphi$ restricts to $\varphi_i$ on each $A_i$ and satisfies
  \(
      \varphi(a_1\cdots a_q)=0
  \)
  whenever $q\ge 1$, $a_j\in A_{i_j}^{\circ}:=\{a\in A_{i_j}:\varphi_{i_j}(a)=0\}$ and adjacent indices differ: $i_j\ne i_{j+1}$.
\end{enumerate}
Equivalently, the inclusion maps $A_i\hookrightarrow A$ are free with respect to $\varphi$ and collectively generate $A$.  In the von~Neumann setting the same construction, followed by completion in the GNS representation of $\varphi$, yields the \emph{free product von~Neumann algebra} $*_{i\in I}(M_i,\tau_i)$, characterised by the same universal freeness property.

\subsection{Non-crossing partitions and free cumulants}

Let $[q]=\{1,\ldots,q\}$. A partition $\pi$ of $[q]$ is \emph{non-crossing} if its blocks can be drawn without crossings on a circle with $q$ labelled points on the boundary.
The lattice of non-crossing partitions on $[q]$ is denoted $NC(q)$; we write $\rho\leq\pi$ when $\rho$ refines $\pi$, and denote by $0_q$ and $1_q$ its minimal and maximal elements.
While we restrict to $[q]$ in this section, the same discussion works for any finite totally ordered set; see Nica and Speicher~\cite{NicaSpeicher2006} for a detailed treatment.

Let $S$ be any set and let $\{\phi_q : S^q \to \mathbb{C}\}_{q\in\mathbb{N}}$ be functions. Their multiplicative extension is the family
\[
\{\phi_\pi : S^q \to \mathbb{C}\}_{q\in\mathbb{N},\,\pi\in NC(q)},
\]
defined by
\[
\phi_\pi(a_1,\ldots,a_q)=\prod_{C\in\pi}\phi_{|C|}(a_c : c\in C),
\]
where, inside each block $C$, the entries are taken in increasing order. In particular $\phi_q=\phi_{1_q}$. When the maps $\phi_q$ are the moments $\varphi_q$ of a NCPS $(A,\varphi)$ we write $\varphi_\pi$, and applying the same rule to the cumulants produces $\kappa_\pi$.

The \emph{free cumulants} $\{\kappa_q\}_{q\geq 1}$ of $(A,\varphi)$ are determined by the relations
\begin{align*}
\varphi(a_1\cdots a_q)=\varphi_q(a_1,\ldots,a_q)&=\sum_{\pi\in NC(q)}\kappa_\pi(a_1,\ldots,a_q),\\
\kappa_q(a_1,\ldots,a_q)&=\sum_{\pi\in NC(q)}\mu(\pi,1_q)\,\varphi_\pi(a_1,\ldots,a_q),\label{eq:mobius}
\end{align*}
where $\mu$ is the Möbius function on $NC(q)$.

The next result, often called Möbius inversion, collects equivalent formulations that we use in Section~\ref{sec:det-M3}; see Kodiyalam and Sunder~\cite[Theorem~7]{KodiyalamSunder2009}.

\begin{theorem}\label{thm:multiplicative-cumulants}
Given two collections of functions $\{\phi_n : S^n \to \mathbb{C}\}_{n\in\mathbb{N}}$ and $\{\kappa_n : S^n \to \mathbb{C}\}_{n\in\mathbb{N}}$ extended multiplicatively, the following conditions are all equivalent:
\begin{enumerate}
  \item $\phi_n = \sum_{\pi\in NC(n)} \kappa_\pi$ for each $n\in\mathbb{N}$.
  \item $\kappa_n = \sum_{\pi\in NC(n)} \mu(\pi,1_n)\,\phi_\pi$ for each $n\in\mathbb{N}$.
  \item $\phi_\tau = \sum_{\pi\in NC(n),\,\pi\leq\tau} \kappa_\pi$ for each $n\in\mathbb{N}$ and $\tau\in NC(n)$.
  \item $\kappa_\tau = \sum_{\pi\in NC(n),\,\pi\leq\tau} \mu(\pi,\tau)\,\phi_\pi$ for each $n\in\mathbb{N}$ and $\tau\in NC(n)$.
\end{enumerate}
\end{theorem}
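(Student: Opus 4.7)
The plan is to establish the four equivalences along the path $(1) \Leftrightarrow (3) \Leftrightarrow (4) \Leftrightarrow (2)$. The implications $(3) \Rightarrow (1)$ and $(4) \Rightarrow (2)$ are immediate: specialise to $\tau = 1_n$ and observe that $\pi \leq 1_n$ for every $\pi \in NC(n)$.

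For $(1) \Rightarrow (3)$, I would fix $\tau \in NC(n)$ with blocks $C_1, \ldots, C_r$ and a tuple $(a_1, \ldots, a_n) \in S^n$. By the definition of the multiplicative extension, $\phi_\tau(a_1,\ldots,a_n) = \prod_{i=1}^r \phi_{|C_i|}(a_c : c \in C_i)$. Applying (1) inside each block expands each factor as $\sum_{\pi_i \in NC(C_i)} \kappa_{\pi_i}(a_c : c \in C_i)$, and distributing the product yields a sum over tuples $(\pi_1, \ldots, \pi_r) \in \prod_i NC(C_i)$. The crucial combinatorial fact is the order-isomorphism
\[
    [0_n, \tau] \;\cong\; \prod_{i=1}^r NC(C_i),
\]
which holds because $\tau$ is non-crossing: a non-crossing refinement of $\tau$ is nothing but a non-crossing partition on each of its blocks, and conversely any such collection, taken together, is non-crossing in $[n]$. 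Under this identification $\prod_i \kappa_{\pi_i} = \kappa_\pi$ by the multiplicativity built into the definition, and the sum reassembles to the right-hand side of (3).

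For $(2) \Rightarrow (4)$, I would follow the same block-wise strategy, but this step needs one additional ingredient: under the isomorphism above, the Möbius function of $NC(n)$ factors as $\mu(\pi, \tau) = \prod_i \mu(\pi_i, 1_{C_i})$, which is a general consequence of the multiplicativity of $\mu$ on products of posets. Granted this factorisation, applying (2) block-by-block to $\tau$ and multiplying the results produces (4).

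Finally, the equivalence $(3) \Leftrightarrow (4)$ is classical Möbius inversion on the finite poset $NC(n)$: for any pair of functions $f, g \colon NC(n) \to \mathbb{C}$, the identity $f(\tau) = \sum_{\pi \leq \tau} g(\pi)$ holds for every $\tau$ if and only if $g(\tau) = \sum_{\pi \leq \tau} \mu(\pi, \tau) f(\pi)$ holds for every $\tau$; I would apply this with $f = \phi_\bullet$ and $g = \kappa_\bullet$ at a fixed input $(a_1, \ldots, a_n)$. The only thing to watch is the bookkeeping of multiplicativity through the block decomposition; once the order-isomorphism $[0_n, \tau] \cong \prod_i NC(C_i)$ and the corresponding factorisation of $\mu$ are in hand, no genuine obstruction remains.
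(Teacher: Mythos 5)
Your proof is correct. Note that the paper itself does not prove this theorem: it is quoted from Kodiyalam--Sunder \cite[Theorem~7]{KodiyalamSunder2009}, so there is no in-paper argument to compare against; what you have written is the standard self-contained proof. Every ingredient you rely on is sound: $(3)\Rightarrow(1)$ and $(4)\Rightarrow(2)$ by specialising to $\tau=1_n$; the poset isomorphism $[0_n,\tau]\cong\prod_{i}NC(C_i)$, which holds precisely because $\tau$ is non-crossing (blocks sitting inside distinct $C_i$'s cannot cross since $C_i$ and $C_j$ do not, and crossings within a single $C_i$ are excluded by $\pi_i\in NC(C_i)$); the resulting factorisations $\kappa_\pi=\prod_i\kappa_{\pi_i}$ and $\mu(\pi,\tau)=\prod_i\mu(\pi_i,1_{C_i})$, the latter because the M\"obius function of an interval depends only on its isomorphism type and is multiplicative over direct products of posets; and pointwise M\"obius inversion on the finite poset $NC(n)$ for $(3)\Leftrightarrow(4)$. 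The one bookkeeping point worth making explicit in a polished write-up is that conditions (1) and (2) are being applied to the totally ordered sets $C_i$ rather than to $[\,|C_i|\,]$; this is legitimate under the paper's conventions, since the entire setup transports along the unique order isomorphism $C_i\cong[\,|C_i|\,]$.
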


The following criterion, due to Speicher~\cite[Theorem~11.20]{NicaSpeicher2006}, says that it suffices to test freeness on chosen generators.

\begin{theorem}\label{thm:Speicher-cumulants-detect-freeness}
Let $\{A_i : i\in I\}$ be unital subalgebras generated by sets $S_i$ inside a tracial $*$-NCPS. Then $\{A_i\}$ is free if and only if
\[
\kappa_q(a_1,\ldots,a_q)=0
\]
for every $q\geq 1$, whenever $a_j\in S_{i_j}$ and not all indices $i_1,\ldots,i_q$ are equal.
\end{theorem}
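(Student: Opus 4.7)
The plan is to reduce both directions to the foundational result \cite[Theorem~11.16]{NicaSpeicher2006}, which characterises freeness of a family $\{A_i\}$ of unital subalgebras by the vanishing of every mixed cumulant $\kappa_q(a_1,\ldots,a_q)$ with $a_j\in A_{i_j}$ and not all $i_j$ equal. The forward implication of the present theorem is then immediate: restricting that hypothesis to $a_j \in S_{i_j} \subset A_{i_j}$ yields the claim.

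For the converse, I would bootstrap the assumed vanishing from the generators $S_i$ up to the whole subalgebras $A_i$. Every element of $A_i$ is a polynomial in $S_i \cup \{1\}$, so by multilinearity of $\kappa_q$ in each slot (immediate from Theorem~\ref{thm:multiplicative-cumulants}) together with the elementary fact that any cumulant of order $\geq 2$ with $1$ as an input vanishes, it suffices to verify $\kappa_q(a_1,\ldots,a_q)=0$ when each $a_j = s_{j,1}\cdots s_{j,k_j}$ is a pure monomial with factors in $S_{i_j}$ and the $i_j$'s are not all equal. Here the central tool is the cumulants-with-products formula of Nica and Speicher \cite[Theorem~14.4]{NicaSpeicher2006},
\[
  \kappa_q(a_1,\ldots,a_q) \;=\; \sum_{\substack{\pi \in NC(N) \\ \pi \vee \sigma = 1_N}} \kappa_\pi\!\left(s_{1,1},\ldots,s_{q,k_q}\right),
\]
where $N = k_1+\cdots+k_q$ and $\sigma\in NC(N)$ is the partition whose blocks are the consecutive intervals $\{s_{j,1},\ldots,s_{j,k_j}\}$. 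The decisive combinatorial point is that the connectivity condition $\pi \vee \sigma = 1_N$, together with the presence of two distinct outer labels, forces every surviving $\pi$ to contain at least one block whose entries straddle intervals carrying different $i_j$ labels; for that block the corresponding block cumulant is a mixed cumulant on generators and vanishes by hypothesis, so the whole summand is zero. With the vanishing thus extended to all of $A_{i_1}\times\cdots\times A_{i_q}$, a second application of \cite[Theorem~11.16]{NicaSpeicher2006} delivers freeness.

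The main obstacle I anticipate is justifying the combinatorial claim just used. If every block of $\pi$ stayed within intervals sharing a single label, then $\pi \vee \sigma$ could only coarsen $\sigma$ within label classes and would be bounded above by the (possibly crossing) coarsening $\tilde\sigma$ of $\sigma$ that groups intervals by label, which is strictly below $1_N$ as soon as two labels are distinct. One must take some care to show that this bound survives the passage from the full-partition join to the $NC$-lattice join used in the formula; ruling out the NC join jumping up to $1_N$ despite the full-lattice join staying below $\tilde\sigma$ is a short but essential argument that exploits non-crossingness. Once this lemma is in hand, the rest of the proof is formal bookkeeping.
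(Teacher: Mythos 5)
The paper does not prove this statement at all: it is quoted verbatim as a known result of Speicher, with the proof delegated to \cite[Theorem~11.20]{NicaSpeicher2006}. Your outline is, in substance, the standard proof of that cited theorem, so there is no conflict with anything the paper does --- but you are supplying an argument where the paper supplies only a citation. Your two reductions are the right ones: the forward direction is an immediate restriction of the mixed-cumulant characterisation of freeness (\cite[Theorem~11.16]{NicaSpeicher2006}), and the converse correctly bootstraps from generators to monomials via multilinearity, the vanishing of cumulants of order at least two with a $1$ entry, and the products-as-arguments formula $\kappa_q(a_1,\ldots,a_q)=\sum_{\pi\vee\sigma=1_N}\kappa_\pi(s_{1,1},\ldots,s_{q,k_q})$.

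The one step you flag --- that a $\pi$ all of whose blocks respect the label classes cannot satisfy $\pi\vee\sigma=1_N$ --- is indeed the only place where real care is needed, and your diagnosis of the subtlety is accurate: the coarsening $\tilde\sigma$ may be crossing, so bounding the full-lattice join by $\tilde\sigma$ does not immediately bound the $NC$-join. The missing lemma that closes this gap is that for an \emph{interval} partition $\sigma$ and $\pi\in NC(N)$, the join $\pi\vee\sigma$ computed in the full partition lattice is itself non-crossing, hence coincides with the join in $NC(N)$; this is exactly how Nica and Speicher dispose of the point. Since your full-lattice join is dominated by $\tilde\sigma<1_N$, the $NC$-join cannot be $1_N$, every surviving $\pi$ must contain a label-mixing block, and the hypothesis kills the corresponding summand. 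With that lemma either proved or cited, your argument is complete and correct; as written it is a faithful sketch of the textbook proof rather than a new route.
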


For later use we state a simple lemma which is proved in \cite[Lemma~6.4]{JayakumarKodiyalam2025}.

\begin{lemma}\label{lem:pi-tilde}
Let $q\geq 1$ and decompose $[q]=D\sqcup E$. Given $\pi\in NC(D)$, there exists a unique $\widetilde{\pi}\in NC(E)$ such that
\begin{enumerate}
  \item $\pi\sqcup\widetilde{\pi}$ is a non-crossing partition of $[q]$;
  \item whenever $\rho\in NC(E)$ satisfies $\pi\sqcup \rho\in NC(q)$, we have $\rho\leq \widetilde{\pi}$.
\end{enumerate}
\end{lemma}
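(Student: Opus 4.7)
The plan is to construct $\widetilde{\pi}$ geometrically from the non-crossing picture of $\pi$, and then check in turn that it lies in $NC(E)$, that it is compatible with $\pi$, and that it is maximal. Uniqueness is then automatic, since a poset has at most one maximum.

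First I would fix a representation of $\pi \in NC(D)$ as a diagram inside the closed disk with $q$ marked boundary points labelled $1,\ldots,q$ in cyclic order, where each block of $\pi$ is drawn as the convex hull of its elements. These polygons are pairwise disjoint by non-crossingness, so they cut the disk into finitely many open regions. I would then define an equivalence relation $\sim$ on $E$ by declaring $e\sim e'$ iff $e$ and $e'$ lie on the boundary of a common region, or equivalently iff they can be joined by an arc in the disk avoiding every block of $\pi$. Let $\widetilde{\pi}$ be the resulting partition of $E$.

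For Step 2, the points of $E$ sitting on a single region appear as a cyclically consecutive sub-arc of the circle, since the region is simply connected and bounded alternately by circular arcs and $\pi$-chords; different regions give disjoint such sub-arcs, so the partition $\widetilde{\pi}$ they induce is non-crossing in $NC(E)$. For Step 3, any block of $\widetilde{\pi}$ lies inside a single region of $\pi$, so it neither crosses a $\pi$-block nor a $\widetilde{\pi}$-block inhabiting a different region; this gives $\pi\sqcup\widetilde{\pi}\in NC(q)$.

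The heart of the proof is Step 4, the maximality. Take any $\rho\in NC(E)$ with $\pi\sqcup\rho\in NC(q)$ and a block $B$ of $\rho$. Drawing $B$ as a convex polygon, non-crossingness of $\pi\sqcup\rho$ means $B$ does not meet any $\pi$-polygon, so $B$ together with a small neighbourhood sits in the open complement of $\pi$ in the disk. Hence every two vertices of $B$ can be joined by a path in that complement (travel along $\partial B$), which says exactly that all elements of $B$ are $\sim$-equivalent, i.e.\ $B$ is contained in a single block of $\widetilde{\pi}$. Therefore $\rho\leq\widetilde{\pi}$. The main obstacle is making this planar separation rigorous: one must justify in a clean combinatorial way (rather than hand-waving with pictures) that a chord $ee'$ not crossing any chord of $\pi$ forces $e,e'$ into the same $\pi$-region. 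One standard route is to phrase the "same region" relation inductively on $|D|$: pick an outermost block $B_0$ of $\pi$, note that it partitions $E$ into the interior points and the exterior points, apply the inductive hypothesis to each side, and use that any $\rho$-block cannot straddle $B_0$ by the non-crossing assumption. Once this inductive step is in hand, the lemma follows cleanly.
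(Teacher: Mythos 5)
The paper does not actually prove this lemma; it cites \cite[Lemma~6.4]{JayakumarKodiyalam2025}, so there is no in-paper argument to compare against. Your construction of $\widetilde{\pi}$ --- take the complementary regions of the convex hulls of the blocks of $\pi$ in the disk and let $\widetilde{\pi}$ be the induced partition of $E$ by ``same region'' --- is the standard one, and your maximality argument in Step~4 (the convex hull of a block of $\rho$ is a connected set disjoint from every $\pi$-polygon, hence lies in a single region) is correct and is exactly the right way to get $\rho\leq\widetilde{\pi}$. Uniqueness from maximality is of course immediate.

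One intermediate claim in your Step~2 is false as stated: the $E$-points adjacent to a single region need not form a cyclically consecutive sub-arc of the circle. For instance, with $q=8$, $D=\{1,2,5,6\}$ and $\pi=\{\{1,2\},\{5,6\}\}$, the two chords cut off two small empty regions and one large middle region whose $E$-points are $\{3,4,7,8\}$, which is not an arc of $[8]$; the same failure occurs whenever $\pi$ has singleton blocks, whose hulls disconnect nothing. The correct statement, which is all you need, is that the $E$-points of two \emph{distinct} regions cannot interleave cyclically: if $e_1<e_2<e_3<e_4$ with $e_1,e_3$ in region $R$ and $e_2,e_4$ in region $R'\neq R$, a path in $R$ from $e_1$ to $e_3$ and a path in $R'$ from $e_2$ to $e_4$ would have to cross while lying in disjoint open sets. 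That gives $\widetilde{\pi}\in NC(E)$, and the same separation argument gives that no block of $\widetilde{\pi}$ crosses a block of $\pi$, so $\pi\sqcup\widetilde{\pi}\in NC(q)$. You already flag that the planar separation needs to be made rigorous, and the induction on an outermost block of $\pi$ that you sketch does handle Steps~2--4 uniformly and combinatorially; with the ``consecutive sub-arc'' assertion replaced by the non-interleaving statement, the proof is complete.
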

Informally, $\widetilde{\pi}$ is the largest non-crossing partition on $E$ such that $\pi\sqcup\widetilde{\pi}$ remains non-crossing.

\subsection{Weighted direct sums and Free Poisson elements}
For a countable discrete group $\mathcal{G}$ we write $L\mathcal{G}$ for its group von Neumann algebra with canonical trace $\tau(\lambda(g))=\delta_{g,e}$. If $F_n$ is the free group on $n$ generators, then $LF({n})$ denotes the corresponding free group factor; in particular $\LF{1}=L\mathbb{Z}$ and $\LF{n}$ is a type~$\mathrm{II}_1$ factor for every $n\geq 2$.

Given tracial NCPS's $(A,\varphi_A)$ and $(B,\varphi_B)$, and a scalar $\alpha \in (0,1)$, the notation $\underset{\alpha}{A}\oplus\underset{1-\alpha}{B}$ indicates the direct sum algebra equipped with the convex combination of states $\alpha\,\varphi_A + (1-\alpha)\,\varphi_B$.

A self-adjoint element $x$ in a tracial $*$-NCPS $(A,\varphi)$ is a \emph{free Poisson element} with rate $\lambda\geq 0$ and jump size $\alpha\in\mathbb{R}$ when its free cumulants satisfy $\kappa_q(x,\ldots,x)=\lambda\alpha^q$ for every $q\geq 1$. If $B={vN}(x)$ denotes the von Neumann algebra generated by $x$, then $(B,\varphi|_B)$ is canonically isomorphic to the weighted direct sum $\underset{1-\lambda}{\mathbb{C}}\oplus\underset{\lambda}{L\mathbb{Z}}$ \cite[Proposition~13]{KodiyalamSunder2009}.

\subsection{Interpolated free group factors}

Interpolated free group factors - found independently by Dykema \cite{Dykema1994} and R\u{a}dulescu \cite{Radulescu1995}, building on Voiculescu's free probability framework - extend the family $\{\LF{n} : n\in\mathbb{N}\}$ to $\{\LF{r} : r\in [1,\infty]\}$.
In the sequel we will only need the following free product formula from \cite[Proposition~1.4.9]{Dykema1994}.
\begin{equation*}\label{eq:LF-matrix}
\Bigl( \underset{1-\alpha}{\mathbb{C}} \oplus \underset{\alpha}{\LF{r}} \Bigr) * M_d(\mathbb{C})
\;=\;
\begin{cases}
\displaystyle \LF{\,r\alpha^{2}+2\alpha(1-\alpha)+1-d^{-2}\,} & \quad (\alpha \ge d^{-2}),\\[6pt]
\displaystyle \underset{1-\alpha d^{2}}{M_d(\mathbb{C})}\;\oplus\;\underset{\alpha d^{2}}{\LF{(r-2)d^{-4}+1+d^{-2}}} & \quad (\alpha \le d^{-2}).
\end{cases}
\end{equation*}

\subsection{Group planar algebras}

Let $\mathcal{G}$ be a finite group  and let $P(\mathcal{G})$ denote the associated group planar algebra in the sense of Jones \cite{JonesShlyakhtenkoWalker2010}. This is a connected, irreducible subfactor planar algebra with modulus $\delta = \sqrt{n}$ and $*$-structure coming from inversion in $\mathcal{G}$; see \cite{JonesShlyakhtenkoWalker2010} for background. The following result of Landau \cite[Theorem~2.4.2]{Landau2002} will be used repeatedly.

\begin{theorem}\label{thm:landau-basis}
For any $(k,\epsilon)$, if $S$ is a $(k,\epsilon)$-tangle with no internal region and exactly $k-1$ internal boxes, then the diagrams obtained by labelling those boxes by elements of $\mathcal{G}$ form a basis of $P_{(k,\epsilon)}(\mathcal{G})$. In particular $\dim P_{(k,\epsilon)}(\mathcal{G}) = n^{k-1}$.
\end{theorem}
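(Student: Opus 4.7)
The plan is to combine an easy dimension check with a Gram-matrix computation.

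By the standard construction of the group planar algebra from \cite{JonesShlyakhtenkoWalker2010}, the dimension $\dim P_{(k,\epsilon)}(\mathcal{G}) = n^{k-1}$ can be read off directly: $P_{(k,\epsilon)}(\mathcal{G})$ admits a concrete description as a space of $\mathcal{G}$-valued labellings of strands subject to a single boundary-product constraint, yielding $n^{k-1}$ independent labellings. On the other hand, independently filling each of the $k-1$ internal boxes of $S$ with a group element produces exactly $n^{k-1}$ labelled diagrams $\{S_{\vec g} : \vec g \in \mathcal{G}^{k-1}\}$ in $P_{(k,\epsilon)}(\mathcal{G})$. Since the two counts match, it suffices to prove that these diagrams are linearly independent.

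For linear independence I would use the Markov trace $\tau$ on $P(\mathcal{G})$ and compute the Gram matrix
\[
M_{\vec g,\vec h} \;:=\; \tau\bigl(S_{\vec h}^{\,*}\cdot S_{\vec g}\bigr).
\]
The $*$-operation on $P(\mathcal{G})$ reflects the tangle and inverts each group-element label, so $S_{\vec h}^{\,*}\cdot S_{\vec g}$ is a closed planar tangle whose value is prescribed by the group-planar-algebra evaluation rule: each closed loop of the resulting diagram contributes a factor that is non-zero precisely when the ordered product of the box-labels encountered along it equals the identity of $\mathcal{G}$. The ``no internal region'' hypothesis on $S$ is exactly what forces the loops of $S_{\vec h}^{\,*}\cdot S_{\vec g}$ to pair each $g_i$ directly with $h_i^{-1}$, with no mixing of distinct box-labels. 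Hence $M_{\vec g,\vec h}\neq 0$ precisely when $\vec g=\vec h$, so the Gram matrix is a non-zero scalar multiple of the identity, proving linear independence and hence the basis property.

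The main obstacle is making the loop-structure of $S_{\vec h}^{\,*}\cdot S_{\vec g}$ precise for an arbitrary tangle $S$. I would first verify the argument for a canonical ``standard'' tangle --- say, one with a linear chain of $k-1$ internal boxes and the simplest boundary connections --- where the loops can be written down by inspection. For a general $S$ satisfying the hypotheses, the absence of internal regions places a strong structural constraint on the underlying planar graph (essentially forcing a tree-like arrangement of boxes and strands), and this is what allows the loop analysis to be carried out uniformly. Once the loops are identified, the non-vanishing criterion reduces to straightforward bookkeeping inside the group $\mathcal{G}$.
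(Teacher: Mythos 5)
The paper itself gives no proof of this statement: it is quoted from Landau \cite[Theorem~2.4.2]{Landau2002}, whose argument uses the exchange relation of the group planar algebra to show that labelled no-internal-region diagrams span, and then concludes by a dimension count. Your attempt at a self-contained proof is therefore a genuinely different route, and it contains a genuine gap at its central step.

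The gap is the claim that for an \emph{arbitrary} $(k,\epsilon)$-tangle $S$ with no internal region, every loop of the closed diagram $S_{\vec h}^{\,*}S_{\vec g}$ meets exactly one internal box of $S$ together with its mirror image, so that the Gram matrix is diagonal. The hypothesis ``no internal region'' does not prevent a strand of $S$ from joining two distinct internal boxes directly (for $k=3$ one can draw two $2$-boxes joined by one strand, with their remaining legs running to the boundary; an Euler-characteristic count shows every region still touches the boundary). For such an $S$, a single loop of $S_{\vec h}^{\,*}S_{\vec g}$ picks up several labels $g_i,g_j,h_j^{-1},h_i^{-1}$, its non-vanishing condition becomes an equation of the form $g_ig_j=h_ih_j$ rather than $g_i=h_i$ and $g_j=h_j$, and the Gram matrix need not be diagonal; its invertibility would then require a separate argument. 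Positivity of the trace cannot rescue this, since positive-definiteness of the Gram matrix is equivalent to the linear independence being proved. Your proposed remedy --- verify a standard tangle and appeal to a ``tree-like arrangement'' forced by the absence of internal regions --- does not close the gap: the statement is quantified over all such $S$, no tree-likeness is actually forced, and the passage from one no-internal-region tangle to another is precisely where Landau's exchange-relation machinery does the work. The dimension count $\dim P_{(k,\epsilon)}(\mathcal{G})=n^{k-1}$ and the count of labellings are unobjectionable.
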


\section{The two-sided GJS construction}\label{sec:gjs-setup}

Let $P$ be an irreducible subfactor planar algebra of modulus $\delta>1$.
We will describe the two-sided GJS construction of \cite{JayakumarKodiyalam2025} whose take-off point is the original GJS construction of \cite{GuionnetJonesShlyakhtenko2010}.
Both of these associate to $P$, a sequence of tracial $*$-algebras and, after completion, a sequence of tracial von Neumann algebras.

For every integer $k\geq 0$ set
\[
A^k = \bigoplus_{m\geq 0} P_{(2m+k,+)}.
\]
The summand $A_m^k = P_{(2m+k,+)}$ is pictured as a $(2m+k,+)$-box, as illustrated in Figure~\ref{fig:typical-Ak} and matching the convention of \cite{JayakumarKodiyalam2025}.

\begin{figure}[ht]
  \centering
  \includegraphics[width=0.15\textwidth]{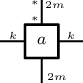}
  \caption{A typical element of $A_m^k$.}
  \label{fig:typical-Ak}
\end{figure}

 Two products live on $A^k$.
The \emph{filtered} product $\#$ of $a_m\in A_m^k$ and $b_n\in A_n^k$ is a sum of components $(a\#b)_t \in A_t^k$ 
with $t$ varying from $|m-n|$ to $m+n$ where $(a\#b)_t$ is as in Figure \ref{fig:filtered-product}.
The \emph{graded} product $\cdot$ multiplies $a_m$ and $b_n$ by simple concatenation, producing an element of $A_{m+n}^k$ as in Figure~\ref{fig:graded-product}.

\begin{figure}[ht]
  \centering
  \begin{subfigure}[b]{0.41\textwidth}
    \centering
    \includegraphics[width=\textwidth]{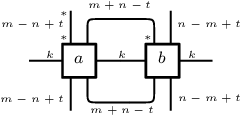}
    \caption{Filtered multiplication $(a \# b)_t$}
    \label{fig:filtered-product}
  \end{subfigure}
  \hfill
  \begin{subfigure}[b]{0.34\textwidth}
    \centering
    \includegraphics[width=\textwidth]{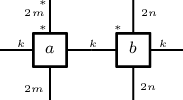}
    \caption{Graded multiplication $a\cdot b$}
    \label{fig:graded-product}
  \end{subfigure}
  \caption{Two multiplication operations on $A^k$}
\end{figure}

The conjugate-linear involution $\dagger$ is defined component-wise: for $a_m\in A_m^k$ we set $a_m^{\dagger}$ to be the planar algebra adjoint $a_m^{*}$ followed by the rotation tangle that restores the distinguished arc of external box back to the original position; see Figure~\ref{fig:dagger}. Extending additively over $A^k$ yields $\dagger$, and with this choice both $(A^k,\#,\dagger)$ and $(A^k,\cdot,\dagger)$ become $*$-algebras, denoted $A_F^k$ and $A_G^k$, respectively \cite[Section~3]{JayakumarKodiyalam2025}.

\begin{figure}[ht]
  \centering
   \includegraphics[width=0.5\textwidth]{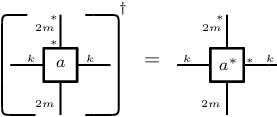}
  \caption{Obtaining $a^{\dagger}$ from $a$: apply the planar algebra adjoint and then the rotation tangle.}
  \label{fig:dagger}
\end{figure}

Two tracial functionals are defined diagrammatically: $\tau_F$ on $A_F^k$ and $\tau_G$ on $A_G^k$. Explicitly, if $a=\sum_{m\ge 0} a_m\in A_F^k$ with $a_m\in P_{(2m+k,+)}$, we set
\[
  \tau_F(a)=\tau(a_0),
\]
where $\tau$ is the normalized pictorial trace on the $(k,+)$-box space $P_{(k,+)}$.  Thus $\tau_F$ simply extracts the degree--$0$ component and applies the planar–algebra trace.
Figure~\ref{fig:traceG} displays the planar tangle for $\tau_G$. Here, ${TL}(m)$ denotes the set of all Temperley–Lieb tangles with $2m$ boundary points. The two traces agree on $P_{(k,+)}$ and satisfy $\tau_F\circ\Phi = \tau_G$, where
\[
\Phi\colon A_G^k \longrightarrow A_F^k, \qquad \Psi\colon A_F^k \longrightarrow A_G^k
\]
are mutually inverse trace-preserving $*$-isomorphisms built by summing over $k$-good and $k$-excellent annular Temperley-Lieb tangles. For brevity, we leave the details and diagrammatic representation of such tangles to \cite[Section~4]{JayakumarKodiyalam2025}. In particular $\tau_G$ is a faithful trace.

\begin{figure}[ht]
  \centering
  \includegraphics[width=0.40\textwidth]{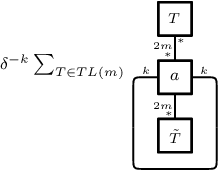}
  \caption{Diagrammatic formula for the trace $\tau_G$ on $A_G^k$.}
  \label{fig:traceG}
\end{figure}

Let $H^k$ be the GNS Hilbert space obtained by completing $A_F^k$ with respect to $\tau_F$, and let
\[
\lambda_k\colon A_F^k \to B(H^k),\qquad \lambda_k(a)(b) = a \# b,
\]
be the left regular representation. Pictorial estimates from \cite[Section~5]{JayakumarKodiyalam2025} show that $\lambda_k(a)$ is bounded for every $a\in A_F^k$. Consequently,
\begin{equation}\label{eq:GJS-tower}
M^k = \lambda_k(A_F^k)^{\prime\prime} = \lambda_k(A_G^k)^{\prime\prime}
\end{equation}
is a tracial von Neumann algebra whose faithful normal trace extends $\tau_F$. The algebras $\{M^k : k\geq 0\}$ form the two-sided GJS sequence attached to $P$; by construction $A_G^k$ is a weakly dense $*$-subalgebra of $M^k$.

In the remainder of the paper we specialise to the group planar algebra $P(\mathcal{G})$. The two-sided construction for this planar algebra was initiated in \cite{JayakumarKodiyalam2025}, where the algebras $M^1$ and $M^2$ were identified. Our goal in Section~\ref{sec:det-M3} is to continue this programme by determining $M^3$.

\section{Determining \texorpdfstring{$M^3$}{M3}}\label{sec:det-M3}

Throughout this section we fix a non-trivial finite group $\mathcal{G}$ of order $n=|\mathcal{G}|$. Let $P=P(\mathcal{G})$ be the associated group planar algebra with modulus $\delta=\sqrt{n}$ and denote by $M^3$ the von Neumann algebra produced by the two-sided GJS construction at level $k=3$. Write $\tau_G$ for the canonical trace on the dense $*$-subalgebra $A_G^3\subset M^3$. Our analysis proceeds in three steps: we isolate a self-adjoint element $Z\in A_G^3$, prove that $Z$ and $P_{(3,+)}$ freely generate $A_G^3$, and then determine the von Neumann algebras ${vN}(Z)$ and $P_{(3,+)}$ separately. The final identification of $M^3$ hinges on the free product formula from \S 2.5. The main result of this note is the following theorem.

\begin{theorem}\label{thm:main-M3}
Let $M^3$ be as above. Then
\[
M^3 \cong LF\left(1+\frac{2(n-1)}{n^2}\right),
\]
so $M^3$ is a $II_1$ factor.
\end{theorem}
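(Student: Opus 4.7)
The plan is to follow the three-step strategy flagged in the introduction. First, identify a distinguished self-adjoint element $Z\in P_{(5,+)}=A_1^{3}$ such that $Z$ and $P_{(3,+)}=A_0^{3}$ together generate the dense $*$-subalgebra $A_G^{3}$ of $M^{3}$. Second, show that $Z$ and $P_{(3,+)}$ are free in $(A_G^{3},\tau_G)$ and identify ${vN}(Z)$ and $P_{(3,+)}$ separately. Third, feed these two building blocks into the free product formula from Section~2.5 to conclude. Throughout, work in the graded picture $A_G^3$ so that multiplication is concatenation and the moment-cumulant machinery of Section~2.3 interfaces cleanly with the planar diagrammatics.

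The first step is to pin down $Z$ diagrammatically. By Theorem~\ref{thm:landau-basis}, $\dim P_{(3,+)}=n^{2}$ (matching $M_n(\mathbb{C})$) and $\dim P_{(5,+)}=n^{4}$; the natural candidate for $Z$ is a symmetric Temperley-Lieb $(5,+)$-tangle carrying a single internal box summed over $\mathcal{G}$, normalised to be $\dagger$-self-adjoint and $\tau_G$-centred. Once $Z$ is fixed, a straightforward induction on $m$ using the graded product of Figure~\ref{fig:graded-product} together with Landau's basis produces every Landau basis tangle of $A_m^{3}=P_{(2m+3,+)}$ from words in $Z$ and $P_{(3,+)}$, so $Z$ and $P_{(3,+)}$ generate $A_G^{3}$ as a unital $*$-algebra.

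The heart of the proof is a simultaneous cumulant calculation. For any word in $Z$'s and $P_{(3,+)}$-elements, the trace $\tau_G$ expands via Figure~\ref{fig:traceG} into a sum over ${TL}(q)$-tangles; this sum is naturally indexed by $NC(q)$, and matching it against the moment-cumulant identity (Theorem~\ref{thm:multiplicative-cumulants}), with Möbius inversion handled by Lemma~\ref{lem:pi-tilde}, should deliver two conclusions at once. First, $\kappa_q(Z,\ldots,Z)=\lambda\alpha^{q}$, certifying that $Z$ is a free Poisson element, so ${vN}(Z)\cong\underset{1-\lambda}{\mathbb{C}}\oplus\underset{\lambda}{L\mathbb{Z}}$. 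Second, every mixed cumulant involving at least one centred element of $P_{(3,+)}$ vanishes: diagrammatically, a centred $P_{(3,+)}$-insertion between $Z$-blocks forces the ambient TL-diagram to contain a singleton block whose contribution annihilates under Möbius inversion. By Speicher's criterion (Theorem~\ref{thm:Speicher-cumulants-detect-freeness}), this yields freeness of ${vN}(Z)$ and $P_{(3,+)}$ inside $M^{3}$.

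With these ingredients in hand, the free product formula of Section~2.5 applied with $r=1$, $d=n$, and $\alpha=\lambda\ge n^{-2}$ gives
\[
M^{3}\cong\LF{\lambda^{2}+2\lambda(1-\lambda)+1-n^{-2}}=\LF{1+2\lambda-\lambda^{2}-n^{-2}},
\]
and comparison with the target $\LF{1+2(n-1)/n^{2}}$ pins down $\lambda=1/n$ (the other root $\lambda=(2n-1)/n$ exceeds $1$), completing the identification and showing that $M^{3}$ is a $\mathrm{II}_1$ factor. The principal obstacle is the cumulant computation: one must set up $Z$ precisely and organise the Temperley-Lieb expansion of $\tau_G(Z^{q})$ so that the rate $\lambda=1/n$ and the jump size $\alpha$ emerge cleanly from the planar combinatorics, while correctly accounting for the modulus $\delta=\sqrt{n}$ and the group-algebraic normalisations built into $P(\mathcal{G})$.
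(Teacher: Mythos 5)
Your outline matches the paper's strategy (isolate $Z\in P_{(5,+)}$, show $Z$ and $P_{(3,+)}$ generate $A_G^3$, prove freeness via cumulants, apply Dykema's free product formula), but the two computations that carry all the weight are not actually performed, and the way you close the argument is circular. You determine the free Poisson rate $\lambda$ by plugging the unknown into the free product formula and ``comparing with the target $\LF{1+2(n-1)/n^2}$'' to conclude $\lambda=1/n$. That assumes the theorem you are trying to prove. The rate and jump size must be established independently: the paper does this not by a fresh Temperley--Lieb expansion of $\tau_G(Z^q)$ (which you correctly flag as the principal obstacle but then do not carry out), but by exhibiting a trace-preserving $*$-isomorphism $\langle Y_e\rangle\to\langle Z\rangle$ onto the analogous generator $Y_e\in A_G^2$ from the earlier paper (Proposition~\ref{prop:YeZiso}), which imports the known cumulants $\kappa_q(Z,\ldots,Z)=\delta^{2q-2}$, hence rate $\delta^{-2}=1/n$ and jump size $\delta^2=n$. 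Note also that your proposed $Z$ is ``$\tau_G$-centred,'' which is incompatible with being free Poisson with these parameters, since $\kappa_1(Z)=\tau_G(Z)=\lambda\alpha=1\neq 0$.

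The freeness step has a similar gap. Your mechanism --- ``a centred $P_{(3,+)}$-insertion between $Z$-blocks forces the ambient TL-diagram to contain a singleton block whose contribution annihilates under M\"obius inversion'' --- is not substantiated and is not how the vanishing actually arises. The paper's argument expands $\tau_G(Z_1\cdots Z_q)$ as a sum over $\pi\in NC(D)$ (where $D$ indexes the $Z$-positions), uses irreducibility to factor each $\pi$-term into $\kappa_\pi(Z,\ldots,Z)\cdot(\tau_G)_{\widetilde\pi}(\cdot)$ after a careful floating-loop count (Lemma~\ref{lem:floating-loop}), and only then invokes Theorem~\ref{thm:multiplicative-cumulants} to identify the modified cumulants $\widetilde\kappa$ with the true ones. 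Without the loop-counting identity $\delta^{2(|D|-|\pi|)}=\kappa_\pi(Z_i:i\in D)$, the M\"obius inversion does not go through, so your sketch does not yet constitute a proof of the mixed-cumulant vanishing required by Speicher's criterion.
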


\subsection{A dense \texorpdfstring{$*$}{*}-algebra and a distinguished generator}

Consider the algebra $A_G^3$ with its graded structure given by
\[
A_G^3 = \bigoplus_{m\geq 0} P_{(2m+3,+)}, \qquad \dim P_{(2m+3,+)} = n^{2m+2}.
\]
 Let $Z\in (A_G^3)_1 = A^3_1 = P_{(5,+)}$ be the self-adjoint element depicted in Figure~\ref{fig:element-Z}. 

\begin{figure}[ht]
  \centering
  \includegraphics[width=0.18\textwidth]{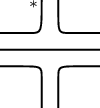}
  \caption{The self-adjoint element $Z\in P_{(5,+)}$.}
  \label{fig:element-Z}
\end{figure}

Our first goal is to show that $Z$ together with $P_{(3,+)}$  generates $A_G^3$.

\begin{proposition}\label{prop:AG3-gen}
The algebra $A_G^3$ is generated by $Z$ and $P_{(3,+)}$.
\end{proposition}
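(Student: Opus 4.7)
The plan is to prove, for every $m\geq 0$, that $P_{(2m+3,+)} \subset B$, where $B\subset A_G^3$ is the unital $*$-subalgebra generated by $Z$ and $P_{(3,+)}$ under the graded product. Since the graded product respects the grading $(A_G^3)_m = P_{(2m+3,+)}$ and adds indices additively, $B$ is itself graded and automatically contains every ``alternating word''
\[
w(a_0,\ldots,a_m) \;=\; a_0 \cdot Z \cdot a_1 \cdot Z \cdots Z \cdot a_m \;\in\; P_{(2m+3,+)}, \qquad a_i \in P_{(3,+)}.
\]
It therefore suffices to show that, for each $m$, these words span $P_{(2m+3,+)}$.

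A dimension count already matches the two sides. By Landau's theorem, $\dim P_{(3,+)} = n^2$, so as the $a_i$ range over $P_{(3,+)}$ the family $\{w(a_0,\ldots,a_m)\}$ is parametrised by $(n^2)^{m+1} = n^{2m+2} = \dim P_{(2m+3,+)}$ tensors. Spanning is thus equivalent to linear independence on a chosen basis of $P_{(3,+)}^{\otimes (m+1)}$.

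To produce such a basis I would invoke Landau's theorem a second time, applied to the composite tangle. Pick a $(3,+)$-tangle $T$ with no internal region and exactly two internal $1$-boxes, so that labelling those boxes by group elements yields a basis of $P_{(3,+)}$. Substituting $a_i = T(g_{2i+1},g_{2i+2})$ into $w(a_0,\ldots,a_m)$ and unfolding the graded multiplication of Figure~\ref{fig:graded-product} produces a fixed $(2m+3,+)$-tangle $S_m$ with $2(m+1) = 2m+2$ internal $1$-boxes, whose remaining planar structure is dictated by the $m$ copies of $Z$ drawn in Figure~\ref{fig:element-Z}. Provided $Z$ is such that sandwiching it between copies of $T$ in this way creates no closed internal region, $S_m$ meets the hypotheses of Landau's theorem, and labelling the $2m+2$ boxes by group elements furnishes an explicit basis of $P_{(2m+3,+)}$ consisting of precisely the words $w(a_0,\ldots,a_m)$ where each $a_i$ runs through the group-labelled basis of $P_{(3,+)}$.

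The main obstacle, which the above sketch isolates, is the picture-level verification that $Z$ has the combinatorial shape needed for $S_m$ to have no internal region and to contain exactly $2m+2$ internal $1$-boxes. In practice this reduces to the base case $m=1$: once one checks directly from Figure~\ref{fig:element-Z} that $a_0\cdot Z\cdot a_1$ gives a Landau tangle for $P_{(5,+)}$, the same concatenation pattern iterates cleanly to higher $m$, and the proposition follows.
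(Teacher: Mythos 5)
Your proposal is correct and follows essentially the same route as the paper: both build Landau-type tangles for $P_{(2m+3,+)}$ (no internal region, exactly $2m+2$ internal boxes labelled by group elements) by concatenating copies of $Z$ with basis elements of $P_{(3,+)}$ under the graded product, and then invoke Theorem~\ref{thm:landau-basis}. The picture-level check you defer to the end is exactly what the paper supplies via Figure~\ref{fig:gentang_2}; note only that the internal boxes carrying group labels are $(2,+)$-boxes, not $1$-boxes.
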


\begin{proof}
Let $\mathcal{B}$ be the algebra generated by $Z$ and $P_{(3,+)}$. We will show that $P_{(2m+3,+)} \subseteq \mathcal{B}$ for all $m \geq 0$.

Consider the tangle structure depicted in Figure~\ref{fig:gentang_2}. This figure illustrates a recursive construction where each stage $m$ produces a tangle with no internal regions and exactly $2m+2$ unlabeled $(2,+)$-boxes. By Theorem~\ref{thm:landau-basis}, labeling these boxes with elements of $\mathcal{G}$ yields $n^{2m+2}$ diagrams that form a linear basis for $P_{(2m+3,+)}$.

Note that each such tangle is constructed entirely from copies of the element $Z$ and elements from the subalgebra $P_{(3,+)}$ both of which are contained in $\mathcal{B}$.
Hence $P_{(2m+3,+)} \subseteq \mathcal{B}$ for each $m \geq 0$.
As $A_G^3 = \bigoplus_{m \geq 0} P_{(2m+3,+)}$, it follows that $A_G^3 = \mathcal{B}$.
\end{proof}

\begin{figure}[!ht]
  \centering
  \includegraphics[width=\textwidth]{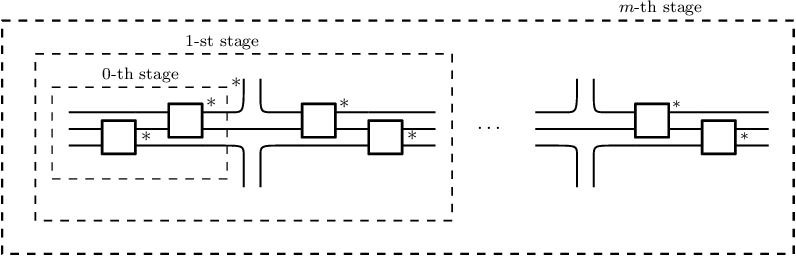}
  \caption{A generating tangle for $P_{(2m+3,+)}$.}
  \label{fig:gentang_2}
\end{figure}

Let $\mathcal{C}=\langle Z\rangle$ denote the $*$-algebra generated by $Z$. To compare $Z$ with earlier constructions, recall the family of elements $Y_a\in A_G^2$ described in \cite{JayakumarKodiyalam2025} and pictured in Figure~\ref{fig:Ya_Ye}. The element $Y_e$ corresponds to the identity of $\mathcal{G}$. 

\begin{figure}[!ht]
  \centering
  \includegraphics[width=0.45\textwidth]{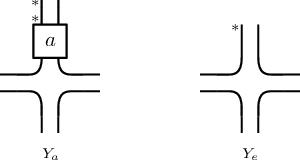}
  \caption{The elements $Y_a\in A_G^2$ for $a\in \mathcal{G}$.}
  \label{fig:Ya_Ye}
\end{figure}

\begin{proposition}\label{prop:YeZiso}
The $*$-algebras $\langle Y_e\rangle$ and $\mathcal{C}$ are $*$-isomorphic by a trace-preserving $*$-isomorphism that takes 
$Y_e$ to $Z$.
\end{proposition}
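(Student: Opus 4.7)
My plan is to reduce the statement to an equality of moments and then verify that equality by a diagrammatic comparison in the group planar algebra.

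Since $Y_e$ and $Z$ are both self-adjoint, the $*$-algebras $\langle Y_e\rangle\subset A_G^2$ and $\mathcal{C}=\langle Z\rangle\subset A_G^3$ are images of $\mathbb{C}[x]$ under the evaluation homomorphisms $x\mapsto Y_e$ and $x\mapsto Z$, taken with respect to the graded products. A trace-preserving $*$-isomorphism $\langle Y_e\rangle\to\mathcal{C}$ sending $Y_e$ to $Z$ therefore exists precisely when $\tau_G((Y_e)^q)=\tau_G(Z^q)$ for every $q\geq 1$. Indeed, both traces are faithful, so matching moments force the kernels of the two evaluation maps to coincide, and the resulting bijection $p(Y_e)\mapsto p(Z)$ is automatically $*$-preserving (each $Y_e^q$ is self-adjoint) and trace-preserving.

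To prove the moment equality I would compare the two closed pictures in $P(\mathcal{G})$. Comparing Figures~\ref{fig:Ya_Ye} and~\ref{fig:element-Z}, the element $Z$ is essentially $Y_e$ with one extra through-strand attached along a side. Because the graded product simply concatenates diagrams, this extra strand survives graded multiplication untouched, and the picture for $Z^q\in P_{(2q+3,+)}$ is the picture for $(Y_e)^q\in P_{(2q+2,+)}$ with one extra through-strand. The trace $\tau_G$ at level $3$ (Figure~\ref{fig:traceG}) then closes a $(2q+3,+)$-box by summing over a family of Temperley-Lieb closures, while $\tau_G$ at level $2$ closes a $(2q+2,+)$-box by a corresponding sum one strand smaller. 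Pairing each level-$3$ TL closure with the level-$2$ closure obtained by absorbing the extra through-strand into an adjacent cap, I expect a bijection of closures under which corresponding closed pictures in $P(\mathcal{G})$ are isotopic and hence evaluate to the same number in $\mathbb{C}$.

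The main obstacle will be making this bijection precise and tracking the compensating $\delta$-factors produced when the through-strand is capped, since the TL sums at levels $2$ and $3$ parametrize closures of boxes of different sizes. A cleaner alternative, suggested by the commented-out figure in the excerpt, is to exhibit an explicit planar tangle $T$ sending each $P_{(2m+2,+)}$ to $P_{(2m+3,+)}$ by attaching a through-strand (and a normalization), together with its inverse, verify that $T(Y_e)=Z$, and check the three compatibilities: $T$ intertwines the graded product, commutes with the involution $\dagger$, and preserves the trace $\tau_G$. Once these three checks are in place, $T$ restricted to $\langle Y_e\rangle$ is the desired trace-preserving $*$-isomorphism, and no moment bookkeeping is required.
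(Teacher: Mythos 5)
Your reduction to the moment equality $\tau_G(Y_e^q)=\tau_G(Z^q)$ followed by a term-by-term diagrammatic comparison is exactly the paper's proof, so the approach is essentially the same. The obstacle you flag in the comparison is not actually there: by the definition of $\tau_G$ (Figure~\ref{fig:traceG}), the trace of a degree-$q$ element of $A_G^k$ is a sum over $TL(q)$ \emph{regardless of} $k$ --- the $k$ fixed boundary strands are closed off by the trace tangle itself rather than by the Temperley--Lieb sum --- so the level-$2$ and level-$3$ sums are indexed by the same set, corresponding closed pictures are isotopic (this is the content of Figure~\ref{fig:momenteqv}), and there are no compensating $\delta$-factors to track. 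Your ``cleaner alternative'' via an explicit intertwining tangle is precisely an earlier, commented-out version of the paper's argument, so either route lands where the paper does.
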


\begin{proof} It is clear that the map that takes $Y_e$ to $Z$ extends to give a $*$-isomorphism between $\langle Y_e\rangle$ and $\mathcal{C}$. The only thing to check is that this is trace preserving. In other words we need to see that $\tau_G(Y_e^m) = \tau_G(Z^m)$ for all $m$. From the definition of the two different $\tau_G$'s (one in $A^2_G$ and the other in $A^3_G$), each is a sum over Temperley-Lieb tangles $T \in TL(m)$. It suffices to check that the corresponding terms are the same. This follows from Figure \ref{fig:momenteqv}.
\end{proof}

\begin{figure}[!ht]
  \centering
  \includegraphics[width=\textwidth]{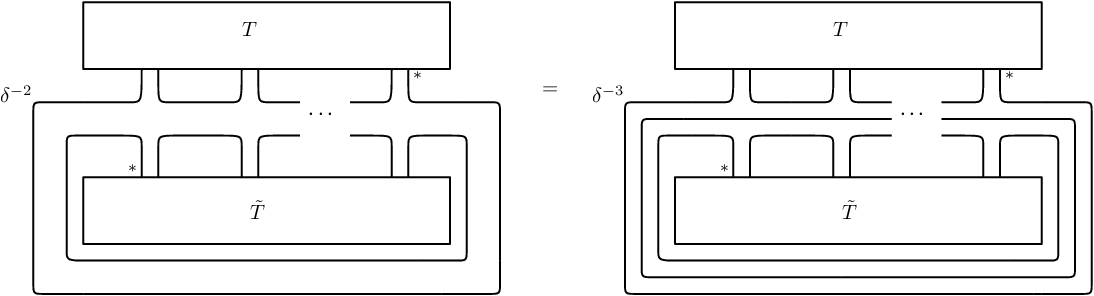}
  \caption{Planar tangles relating $Y_e$ and $Z$.}
  \label{fig:momenteqv}
\end{figure}

The identification of ${\mathcal C}$ with $\langle Y_e\rangle$ allows us to import the analytic information established in \cite{JayakumarKodiyalam2025}.
We record the consequence in the next lemma.

\begin{lemma}\label{lem:Z-free-poisson}
Let $Z\in A_G^3$ be the element from Figure~\ref{fig:element-Z}. 
Then, $Z$ satisfies:
\begin{enumerate}
  \item The free cumulants of $Z$ are $\kappa_q(Z,\ldots,Z)=\delta^{2q-2}$ for every $q\geq 1$.
  \item $Z$ is a free Poisson element with rate $\delta^{-2}$ and jump size $\delta^2$.
  \item The von Neumann algebra ${vN}(Z)$ is isomorphic to $\underset{1-\delta^{-2}}{\mathbb{C}}\oplus \underset{\delta^{-2}}{L\mathbb{Z}} = \underset{1-\frac{1}{n}}{\mathbb{C}}\oplus \underset{\frac{1}{n}}{L\mathbb{Z}}$.
\end{enumerate}
\end{lemma}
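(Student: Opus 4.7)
The plan is to bootstrap the entire lemma from Proposition~\ref{prop:YeZiso}, which already does the heavy lifting by identifying $\mathcal{C}=\langle Z\rangle$ with $\langle Y_e\rangle\subset A_G^2$ in a trace-preserving way. Once the isomorphism is in hand, parts~(1)--(3) unfold as a short chain of definitions and citations.

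For part~(1), I would argue as follows. Since the isomorphism of Proposition~\ref{prop:YeZiso} is trace preserving and sends $Y_e\mapsto Z$, it preserves every moment: $\tau_G(Z^q)=\tau_G(Y_e^q)$ for all $q\geq 1$. Free cumulants depend only on moments via Möbius inversion (Theorem~\ref{thm:multiplicative-cumulants}), so equality of moments forces $\kappa_q(Z,\ldots,Z)=\kappa_q(Y_e,\ldots,Y_e)$ for every $q$. The cumulants on the right-hand side were already computed in \cite{JayakumarKodiyalam2025} during the analysis of $M^2$, where they are shown to equal $\delta^{2q-2}$. Importing that computation gives the claimed formula for $Z$.

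Part~(2) is then immediate from the definition of a free Poisson element recalled in \S 2.4: with rate $\lambda=\delta^{-2}$ and jump size $\alpha=\delta^{2}$ one has $\lambda\alpha^{q}=\delta^{-2}\cdot\delta^{2q}=\delta^{2q-2}$, which matches the cumulants just established. Part~(3) is a direct application of \cite[Proposition~13]{KodiyalamSunder2009}, stated in \S 2.4: the von Neumann algebra generated by a free Poisson element of rate $\lambda$ is $\underset{1-\lambda}{\mathbb{C}}\oplus\underset{\lambda}{L\mathbb{Z}}$. Substituting $\lambda=\delta^{-2}=1/n$ yields the stated isomorphism.

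There is no real obstacle once Proposition~\ref{prop:YeZiso} is available; the work has been moved into that isomorphism. If instead one wanted a self-contained argument not relying on the predecessor paper, the only nontrivial step would be a direct evaluation of $\tau_G(Z^q)$ via the diagrammatic trace formula of Figure~\ref{fig:traceG}. There one would expand $\tau_G(Z^q)$ as a sum over $T\in TL(q)$, identify $T$ with the corresponding non-crossing partition $\pi\in NC(q)$ under the standard bijection, and count the closed loops in the resulting planar diagram to obtain moments of the form $\sum_{\pi\in NC(q)}\delta^{2q-2|\pi|}$; the fourth equivalent formulation of Theorem~\ref{thm:multiplicative-cumulants} then reads off $\kappa_q=\delta^{2q-2}$. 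The careful loop count is the only delicate piece of that alternative approach, and it is exactly what the bootstrap via Proposition~\ref{prop:YeZiso} lets us avoid.
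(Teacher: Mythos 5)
Your proposal is correct and follows essentially the same route as the paper: both transport the known moment/cumulant computations for $Y_e$ from \cite{JayakumarKodiyalam2025} to $Z$ via the trace-preserving $*$-isomorphism of Proposition~\ref{prop:YeZiso}, then read off parts (2) and (3) from the definition of a free Poisson element and \cite[Proposition~13]{KodiyalamSunder2009}. The extra sketch of a self-contained diagrammatic loop count is a nice aside but not needed.
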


\begin{proof}
Each statement holds for $Y_e$ 
by \cite[Corollary~6.3]{JayakumarKodiyalam2025}
and \cite[Proposition 13]{KodiyalamSunder2009}. 
Because $\phi$ in Proposition~\ref{prop:YeZiso} is a trace-preserving $*$-isomorphism, it carries moments and cumulants of $Y_e$ to those of $Z$. The desired statements are then clear.
\end{proof}

\subsection{Freeness and an algebraic free product decomposition}

We now establish the freeness required to identify $M^3$.

\begin{proposition}\label{prop:C-and-P3-free}
The subalgebras $\mathcal{C}$ and $P_{(3,+)}$ are free in $(A_G^3,\tau_G)$.
\end{proposition}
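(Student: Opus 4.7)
The strategy is to verify Speicher's cumulant criterion (Theorem~\ref{thm:Speicher-cumulants-detect-freeness}): every mixed free cumulant $\kappa_q(c_1,\ldots,c_q)$ with $c_j\in\{Z\}\cup P_{(3,+)}$ and not all indices on the same side must vanish. The cleanest route is to compute the joint moments of words in $Z$ and $P_{(3,+)}$ under $\tau_G$ directly and show they agree with those in the free product $(\mathcal{C},\tau_G|_{\mathcal{C}})\ast(P_{(3,+)},\tau_G|_{P_{(3,+)}})$; Möbius inversion (Theorem~\ref{thm:multiplicative-cumulants}) then forces all mixed cumulants to vanish.

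The main computation is diagrammatic. For $p_j\in P_{(3,+)}$, the graded product $Z\,p_1\,Z\,p_2\cdots Z\,p_k$ is a single tangle in $P_{(2k+3,+)}$, and applying $\tau_G$ via Figure~\ref{fig:traceG} yields a sum over Temperley-Lieb tangles $T\in TL(k)$. Each such $T$ pairs up the $Z$-positions in a non-crossing way, equivalently producing a non-crossing partition $\pi\in NC(k)$ on the $Z$-positions together with the complementary partition $\widetilde{\pi}$ on the $p_j$-positions from Lemma~\ref{lem:pi-tilde}. The planar shape of $Z$ (Figure~\ref{fig:element-Z}) makes the contribution of $T$ factorise as
\[
\Big(\prod_{C\in\pi}\delta^{2|C|-2}\Big)\,\tau_{\widetilde{\pi}}(p_1,\ldots,p_k)\;=\;\kappa_\pi(Z,\ldots,Z)\,\tau_{\widetilde{\pi}}(p_1,\ldots,p_k),
\]
using Lemma~\ref{lem:Z-free-poisson} for the first factor. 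Summing over $\pi\in NC(k)$ reproduces the moment formula
\[
\tau_G(Z p_1 Z p_2\cdots Z p_k)=\sum_{\pi\in NC(k)}\kappa_\pi(Z,\ldots,Z)\,\tau_{\widetilde{\pi}}(p_1,\ldots,p_k),
\]
which is exactly the value this word takes in the free product; the remaining word shapes (with $p$'s at the ends, or with powers $Z^r$) are handled identically by multilinearity.

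The main obstacle is the planar bookkeeping that identifies the Temperley-Lieb sum with the non-crossing partition sum and produces the precise factorisation above. Concretely, one must check that two $Z$'s paired by $T$ close up to the power of $\delta$ dictated by the free Poisson cumulants of $Z$, and that the $p_j$'s trapped between successive $Z$-caps are grouped into exactly the blocks of $\widetilde{\pi}$ so as to contribute $\tau_{\widetilde{\pi}}(p_1,\ldots,p_k)$. Once this diagrammatic identity is established, the remainder is a standard application of Möbius inversion and Speicher's criterion.
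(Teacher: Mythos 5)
Your strategy coincides with the paper's: expand $\tau_G$ of a mixed word via the Temperley--Lieb sum, organise the terms by a non-crossing partition $\pi$ on the $Z$-positions together with its complement $\widetilde{\pi}$ on the $P_{(3,+)}$-positions (Lemma~\ref{lem:pi-tilde}), recognise the factorisation $\sum_{\pi}\kappa_\pi(Z,\ldots,Z)\,(\tau_G)_{\widetilde{\pi}}(\cdots)$, and finish with M\"obius inversion and Speicher's criterion. However, the factorisation you write down is precisely the step that carries all the content, and your proposal names it as ``the main obstacle'' without carrying it out. The paper establishes it by counting floating loops --- Lemma~\ref{lem:floating-loop} gives exactly $2(|D|-|\pi\sqcup\widetilde{\pi}|+1)$ of them --- and by using irreducibility to split the diagram along the blocks $V$ of $\widetilde{\pi}$, each contributing $\delta^{-1}\cdot\delta^{3}(\tau_G)_{|V|}(\cdots)$, together with an overall $\delta^{-3}\cdot\delta$ from the external structure; only after assembling these powers of $\delta$ does one land on $\delta^{2(|D|-|\pi|)}=\kappa_\pi(Z,\ldots,Z)$. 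As written, the proof is therefore incomplete at its central step.

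A second, smaller issue: Speicher's criterion requires the vanishing of $\kappa_q(c_1,\ldots,c_q)$ for \emph{arbitrary} interleavings of $Z$'s and elements of $P_{(3,+)}$, not only the alternating words $Zp_1Zp_2\cdots Zp_k$. Multilinearity reduces general elements of $P_{(3,+)}$ to basis elements, but it does not reduce a word such as $Z^2p_1Zp_2$ or $p_1p_2Z$ to alternating ones. The remedy is simply to run the same diagrammatic computation for an arbitrary decomposition $[q]=D\sqcup E$, as the paper does from the outset; this costs nothing, but the appeal to multilinearity is not the right justification.
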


\begin{proof}
Fix $q\geq 1$ and set $S=\{Z\}\cup P_{(3,+)}$. Choose elements $Z_1,\ldots,Z_q\in S$. Our goal is to prove that the mixed free cumulant $\kappa_q(Z_1,\ldots,Z_q)$ vanishes whenever the tuple contains at least one occurrence of $Z$ and at least one element of $P_{(3,+)}$.

\medskip\noindent\textbf{Step 1: Separating the indices.}
Let
\[
D=\{i\in[q] : Z_i = Z\}, \qquad E=[q]\setminus D.
\]
If either set is empty then every $Z_i$ belongs to the same subalgebra and there is nothing to prove, so we henceforth assume that both $D$ and $E$ are non-empty. For each $\pi\in NC(D)$, let  $\widetilde{\pi}\in NC(E)$ be as in Lemma~\ref{lem:pi-tilde}. As an example, suppose that $q=18$ and $D = \{2,5,8,11,13,14,17\}$ so that $E = \{1,3,4,6,7,9,10,12,15,16,18\}$ as shown in Figure \ref{fig:pi_term_example}.

\begin{figure}[!ht]
\centering
\includegraphics[width=1\textwidth]{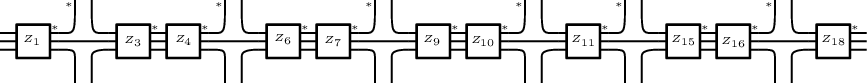}
\caption{Diagrammatic expansion of $Z_1Z_2\cdots Z_{18}$ illustrating the index sets $D$ and $E$.}
\label{fig:pi_term_example}
\end{figure}

\medskip\noindent\textbf{Step 2: Expanding the mixed moment.}
We write $\tau_G(Z_1\cdots Z_q)$ as a sum over $\pi\in NC(D)$ and consider the $\pi$-term.
To illustrate the construction explicitly, we choose $\pi = \{\{2, 8, 11\}, \{5\}, \{13, 14, 17\}\}$, which automatically determines $\widetilde{\pi} = \{\{1, 12, 18\}, \{3, 4, 6, 7\}, \{9, 10\}, \{15, 16\}\}$ - see Figure \ref{fig:pi_term}. Using sphericality this can be redrawn as Figure \ref{fig:pi_term_redrawn}.

\begin{figure}[!ht]
\centering
\includegraphics[width=\textwidth]{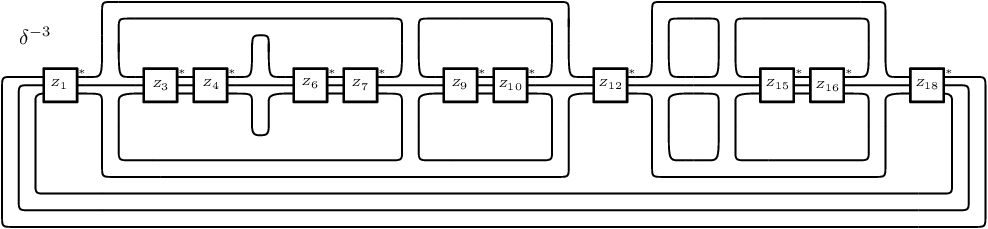}
\caption{The $\pi$-term}
\label{fig:pi_term}
\end{figure}

\begin{figure}[!ht]
\centering
\includegraphics[width=\textwidth]{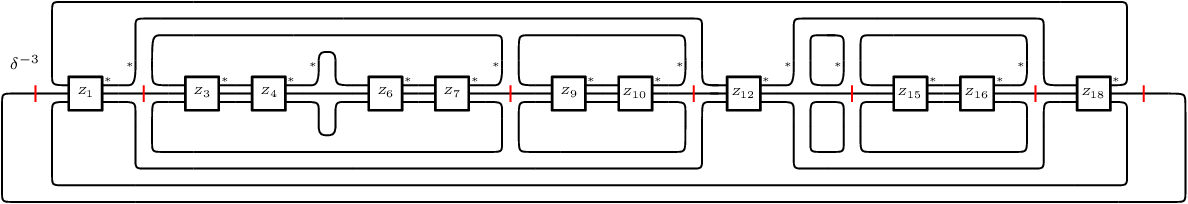}
\caption{The $\pi$-term redrawn}
\label{fig:pi_term_redrawn}
\end{figure}

To evaluate this summand, we perform the reductions indicated in Figure~\ref{fig:redrwn_pi_term_2}: floating loops are removed (in this example there are 2 of them that contribute a $\delta^2$) and then the rest of the picture breaks up into factors according to the classes of $\tilde{\pi}$, by irreducibility.  Each class, say $V$, of $\tilde{\pi}$ contributes a factor of $\delta^{-1}$ together with $\delta^3(\tau_G)_{|V|}(Z_i:i \in V)$, as shown in Figure~\ref{fig:redrwn_pi_term_2}. Besides this, there is a multiplicative factor of $\delta^{-3}$ and a single leftover loop that gives a $\delta$.

\begin{figure}[!ht]
\centering
\includegraphics[width=\textwidth]{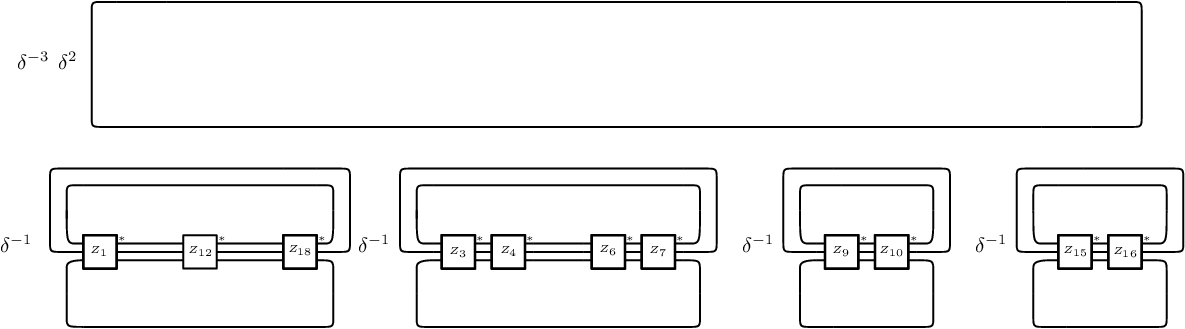}
\caption{Using irreducibility.}
\label{fig:redrwn_pi_term_2}
\end{figure}

Therfore the $\pi$-summand evaluates to
$$
\delta^{-3}.\delta.\delta^{\text {number of floating loops~}}.\delta^{2|\tilde{\pi}|}.(\tau_G)_{\widetilde{\pi}}\bigl(Z_j : j\in E\bigr),
$$
which by Lemma
\ref{lem:floating-loop} evaluates to
$$
\delta^{-2}.\delta^{2(|D|-|\pi|-|\tilde{\pi}|+1)}.\delta^{2|\tilde{\pi}|}.(\tau_G)_{\widetilde{\pi}}\bigl(Z_j : j\in E\bigr),
$$
which equals
$$
\delta^{2(|D|-|\pi|)}.(\tau_G)_{\widetilde{\pi}}\bigl(Z_j : j\in E\bigr).
$$
We now assert that 
$$
\delta^{2(|D|-|\pi|)} = \kappa_{\pi}(Z_i: i \in D).
$$
Since each $Z_i=Z$ for $i \in D$, this assertion is an easy consequence of Lemma \ref{lem:Z-free-poisson}(1).
Thus we have seen that
\begin{equation}\label{eq:factorization}
\tau_G(Z_1\cdots Z_q) = \sum_{\pi\in NC(D)} \kappa_\pi\bigl(Z_i : i\in D\bigr)\,(\tau_G)_{\widetilde{\pi}}\bigl(Z_j : j\in E\bigr),
\end{equation}.

In our particular example, consider the $\pi$-term of $\tau_G(Z_1Z_2 \ldots Z_{18})$ for our chosen partition $\pi$. It evaluates to
\begin{align*}
 & \delta^{-3}  \cdot\delta \cdot \delta^{2} \cdot (\delta^{2} \tau_G(Z_1 Z_{12} Z_{18})) \cdot(\delta^2\tau_G(Z_3 Z_4 Z_6 Z_7)) \cdot(\delta^2\tau_G(Z_9 Z_{10}))\cdot (\delta^2\tau_G(Z_{15} Z_{16}))\\
&= \delta^{-2} \cdot \delta^{2} \cdot \delta^{2(4)} \cdot  (\tau_G)_{\widetilde{\pi}}(Z_1, Z_3, Z_4, Z_6, Z_7, Z_9, Z_{10},Z_{12}, Z_{15}, Z_{16}, Z_{18})\\
&= \delta^{2(7-3)} \cdot  (\tau_G)_{\widetilde{\pi}}(Z_1, Z_3, Z_4, Z_6, Z_7, Z_9, Z_{10},Z_{12}, Z_{15}, Z_{16}, Z_{18})\\
&= \kappa_\pi(Z_2, Z_5, Z_8, Z_{11}, Z_{13}, Z_{14}, Z_{17}) \cdot (\tau_G)_{\widetilde{\pi}}(Z_1, Z_3, Z_4, Z_6, Z_7, Z_9, Z_{10},Z_{12}, Z_{15}, Z_{16}, Z_{18})
\end{align*}

\medskip\noindent\textbf{Step 3: Re-indexing by partitions of $[q]$.}
We now rewrite the right-hand side of Equation~\eqref{eq:factorization} as a sum over partitions of the whole set $[q]$. Recall that $S=\{Z\}\cup P_{(3,+)}$.

For any $n \in {\mathbb N}$ define $\tilde{\kappa}_n: S^n \rightarrow {\mathbb C}$ by
\[
\widetilde{\kappa}_n(Z_1,\ldots,Z_n) = \begin{cases}
\kappa_n(Z_1,\ldots,Z_n) & \text{if all $Z_i=Z$ or all $Z_i$ lie in $P_{(3,+)}$}, \\
0 & \text{otherwise.}
\end{cases}
\]

Then it is easy to see that the multiplicative extension of $\tilde{\kappa}_n$ satisfies
\[
\widetilde{\kappa}_\sigma(Z_1,\ldots,Z_q) = \begin{cases}
\kappa_\sigma(Z_1,\ldots,Z_q) & \text{if every block of $\sigma$ is contained } \\
& \text{either in } D \text{ or in } E, \\
0 & \text{otherwise.}
\end{cases}
\]

Therefore,
\[
\sum_{\sigma\in NC(q)} \widetilde{\kappa}_\sigma(Z_1,\ldots,Z_q) = \sum_{\pi\in NC(D)} \kappa_\pi(\{Z_i : i\in D\}) \left( \sum_{\rho\in NC(E),\,\rho\leq \widetilde{\pi}} \kappa_\rho(\{Z_j : j\in E\}) \right).
\]

From here it follows using Theorem ~\ref{thm:multiplicative-cumulants} that
\begin{equation*}\label{eq:moment-by-sigma}
\tau_G(Z_1\cdots Z_q) = \sum_{\sigma\in NC(q)} \widetilde{\kappa}_\sigma(Z_1,\ldots,Z_q).
\end{equation*}

It now follows by Möbius inversion that $\widetilde{\kappa}_q = \kappa_q$, so every mixed cumulant involving $Z$ and $P_{(3,+)}$ vanishes. Invoking Theorem~\ref{thm:Speicher-cumulants-detect-freeness}, which tests freeness via cumulants on the set $S$, we see that the subalgebra they span is free in $(A_G^3,\tau_G)$.
\end{proof}

The next lemma records the loop-counting discussed above.

\begin{lemma}\label{lem:floating-loop}
For $q \geq 1$ and $Z_1,\ldots,Z_q\in \{Z\}\cup P_{(3,+)}$ let $D$ and $E$ be as in the proof of 
 Proposition~\ref{prop:C-and-P3-free}. Let $\pi \in NC(D)$. Then the $\pi$-term diagram as in Figure \ref{fig:pi_term} 
 contains exactly $2\bigl(|D|-|\pi\sqcup\widetilde{\pi}|+1\bigr)$ floating loops.
\end{lemma}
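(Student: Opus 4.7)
The plan is to count the floating loops in the $\pi$-term diagram by a topological argument on the sphere. Fix a spherical embedding as in Figure~\ref{fig:pi_term}, with the $|D|$ copies of $Z$ wired together according to the blocks of $\pi$ and the $|E|$ boxes from $P_{(3,+)}$ wired according to $\widetilde{\pi}$. The floating loops are precisely those closed curves in the diagram that do not touch any box; each bounds its own region on the sphere.

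I would then apply Euler's formula $V-E+F=2$ to the underlying planar multigraph. Here $V=|D|+|E|$, while $E$ is determined by the valences (ten strings per $Z$-box, six per $(3,+)$-box), plus one additional closed edge per floating loop. The heart of the argument lies in the face count: by sphericality the faces split into (i) one face per floating loop, (ii) faces associated to blocks of $\pi$, and (iii) faces associated to blocks of $\widetilde{\pi}$. Inspection of the internal five-strand wiring of $Z$ shows that a block of $\pi$ of size $b$ contributes a fixed number of interior faces scaling like $2(b-1)$, and the Temperley--Lieb structure on the $E$-side gives a fixed contribution per block of $\widetilde{\pi}$. Feeding the face count into Euler's formula and simplifying yields $L=2(|D|-|\pi|-|\widetilde{\pi}|+1)$, which is the claimed formula.

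An alternative, more hands-on route is induction on $|\pi|$. In the base case $|\pi|=1$, all copies of $Z$ in $D$ are chained together by a single cumulant block, and the floating loops can be counted directly from the wiring of $Z$. For the inductive step, one picks an interior block $B\in\pi$ (which exists by non-crossing), removes it to obtain a smaller $\pi'$ on $D\setminus B$, and tracks how $\widetilde{\pi}$ adjusts via Lemma~\ref{lem:pi-tilde}. A local planar calculation then shows that the floating-loop count drops by exactly $2|B|$, matching the change in $2(|D|-|\pi\sqcup\widetilde{\pi}|+1)$.

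The main obstacle in either route will be managing the planar duality between $\pi$ and $\widetilde{\pi}$ with sufficient precision. The key point is that $\widetilde{\pi}$ is a Kreweras-style complement of $\pi$ inside $NC(q)$, so local modifications to $\pi$ induce predictable local modifications to $\widetilde{\pi}$, and the faces of the diagram are in essentially canonical bijection with the blocks of $\pi\sqcup\widetilde{\pi}$ together with the floating loops. Once this bijection is pinned down, the internal wiring of $Z$ reduces the remaining count to elementary planar combinatorics, and the claimed identity follows.
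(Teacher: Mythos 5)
Your proposal is a plan rather than a proof, and the decisive steps are left as assertions. In the Euler-characteristic route, everything hinges on the face count, but the contribution of a block of $\pi$ is stated only as ``a fixed number of interior faces scaling like $2(b-1)$'' and the contribution of a block of $\widetilde{\pi}$ as ``a fixed contribution''; neither is derived from the actual wiring of $Z$ (Figure~\ref{fig:element-Z}) or from the structure of the $\pi$-term diagram (Figure~\ref{fig:pi_term}), and the final ``feeding into Euler's formula and simplifying'' is exactly the computation that would constitute the proof. In the inductive route, the claimed local statement does not match the formula: if you delete a block $B$ from $\pi$, then $|D|$ drops by $|B|$ and $|\pi|$ drops by $1$, while Lemma~\ref{lem:pi-tilde} forces $\widetilde{\pi}$ to get \emph{coarser} (so $|\widetilde{\pi}|$ cannot increase); hence $2\bigl(|D|-|\pi|-|\widetilde{\pi}|+1\bigr)$ changes by $2(-|B|+1+|\widetilde{\pi}|-|\widetilde{\pi}'|)\geq 2(1-|B|)$, which is never $-2|B|$. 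So the inductive step as stated would fail, or at least requires a different surgery than the one you describe.

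You also miss the structural point that actually produces the factor of $2$, which is where the paper's (much shorter) argument lives: the loop count $|D|-|\pi\sqcup\widetilde{\pi}|+1$ is already established in \cite[Proposition~6.6]{JayakumarKodiyalam2025} for the analogous diagram in which the $(2,+)$-boxes are kept as labelled boxes, so that floating loops arise only in the lower half of the picture. In the present $\pi$-term the upper boxes are replaced by explicit identity tangles, so the upper half of the diagram contributes an identical family of floating loops, doubling the count. A correct write-up should either invoke that earlier proposition and justify the doubling, or, if you insist on a self-contained Euler/induction argument, actually carry out the edge and face counts using the specific five-strand structure of $Z$ and the identity tangles in the upper half.
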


\begin{proof}
This is a modification of \cite[Proposition~6.6]{JayakumarKodiyalam2025}. In the original construction, the labeled $(2,+)$-boxes were kept as labeled boxes, so floating loops contributed only from the lower half of the diagram. Here, we use explicit identity tangles instead of labeled upper boxes. This means floating loops now contribute from both the upper and lower parts of the diagram, yielding the factor of 2 in the loop count formula.
\end{proof}

Combining Proposition~\ref{prop:AG3-gen} with Proposition~\ref{prop:C-and-P3-free} yields the following.

\begin{proposition}\label{prop:AG3-free-product}
The algebra $A_G^3$ is the algebraic free product $\mathcal{C} * P_{(3,+)}$. Hence,
\[
M^3 = {vN}(A_G^3) = {vN}(Z) * P_{(3,+)}.
\]
\end{proposition}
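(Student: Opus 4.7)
The plan is to deduce Proposition~\ref{prop:AG3-free-product} as a formal consequence of Propositions~\ref{prop:AG3-gen} and \ref{prop:C-and-P3-free}, together with the intrinsic characterization of free products recalled in Section~\ref{sec:prelim}.

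For the algebraic identification, I would argue as follows. Proposition~\ref{prop:AG3-gen} shows that $\mathcal{C}$ and $P_{(3,+)}$ generate $A_G^3$ as a unital $*$-algebra, while Proposition~\ref{prop:C-and-P3-free} shows that they are free in the tracial $*$-NCPS $(A_G^3,\tau_G)$. Invoking the equivalent characterization of the algebraic free product recorded in Section~\ref{sec:prelim} (a tracial NCPS generated by a free family of unital $*$-subalgebras is canonically identified with the free product of those subalgebras endowed with the restricted states), I would immediately conclude
\[
(A_G^3,\tau_G) \;\cong\; (\mathcal{C},\tau_G|_{\mathcal{C}}) * (P_{(3,+)},\tau_G|_{P_{(3,+)}}).
\]

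For the von Neumann statement, I would first recall from Equation~\eqref{eq:GJS-tower} that $M^3 = \lambda_3(A_G^3)''$, so $A_G^3$ is weakly dense in $M^3$ and $\tau_G$ extends to the faithful normal trace on $M^3$. Next I would invoke the preservation of freeness under von Neumann closures (\cite[Lemma~2.5.7]{NicaSpeicher2006}) to upgrade the freeness of $\mathcal{C}$ and $P_{(3,+)}$ to freeness of the corresponding von Neumann subalgebras ${vN}(Z)=\mathcal{C}''$ and ${vN}(P_{(3,+)})=P_{(3,+)}$, where the latter equality is automatic since $P_{(3,+)}$ is finite-dimensional. The weak density of $A_G^3$ in $M^3$ ensures that ${vN}(Z)$ and $P_{(3,+)}$ jointly generate $M^3$, and the universal freeness property recalled in Section~\ref{sec:prelim} then identifies $M^3$ with the von Neumann free product ${vN}(Z) * P_{(3,+)}$.

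The main obstacle, if one can call it such, is purely notational: one must verify that the algebraic and von Neumann free product constructions apply verbatim to the pair $(\mathcal{C},P_{(3,+)})$, and that passing to weak closures preserves both freeness and generation. Both checks are routine, so the bulk of the work has already been done in Propositions~\ref{prop:AG3-gen} and \ref{prop:C-and-P3-free}; the present proposition is essentially a clean packaging of those two results in the language of free products.
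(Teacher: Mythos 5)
Your proposal is correct and follows essentially the same route as the paper: combine Proposition~\ref{prop:AG3-gen} (generation) with Proposition~\ref{prop:C-and-P3-free} (freeness), identify $A_G^3$ with the algebraic free product via the universal/intrinsic characterization, and pass to double commutants for the von Neumann statement. Your version merely spells out a few routine details (the citation of \cite[Lemma~2.5.7]{NicaSpeicher2006} and the observation that $P_{(3,+)}$ is already weakly closed) that the paper leaves implicit.
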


\begin{proof}
By Proposition~\ref{prop:AG3-gen}, 
the subalgebras $\mathcal{C}$ and $P_{(3,+)}$ generate $A_G^3$. 
Proposition~\ref{prop:C-and-P3-free} 
shows that they are free with respect 
to $\tau_G$, so $A_G^3$ satisfies the 
universal property of the algebraic 
free product $\mathcal{C} * P_{(3,+)}$. Taking double commutants gives the described decomposition of $M^3$.
\end{proof}

\subsection{Proof of the main theorem}

\begin{proof}[Proof of Theorem~\ref{thm:main-M3}]
Lemma~\ref{lem:Z-free-poisson} gives
\[
{vN}(Z) \cong \underset{1-\frac{1}{n}}{\mathbb{C}} \oplus \underset{\frac{1}{n}}{L\mathbb{Z}}.
\]
We know that $P_{(3,+)} \cong M_n(\mathbb{C})$ endowed with its normalized trace. Applying Proposition~\ref{prop:AG3-free-product}, we obtain
\[
M^3 = {vN}(A_G^3) = {vN}(Z) * P_{(3,+)} \cong \left(\underset{1-\frac{1}{n}}{\mathbb{C}} \oplus \underset{\frac{1}{n}}{L\mathbb{Z}}\right) * M_n(\mathbb{C}).
\]
By the free product formula of Equation~\eqref{eq:LF-matrix}, 
\[
\left(\underset{1-\frac{1}{n}}{\mathbb{C}} \oplus \underset{\frac{1}{n}}{L\mathbb{Z}}\right) * M_n(\mathbb{C}) \cong \LF{1+\frac{2(n-1)}{n^2}}.
\]
Thus $M^3$ is the interpolated free group factor $\LF{1+\frac{2(n-1)}{n^2}}$, as claimed.
\end{proof}


The diagrammatic route we follow pinpoints $M^3$ as an interpolated free group factor with no auxiliary factoriality argument required. Further, $M^3$ remembers only the order $n=|\mathcal{G}|$ of the group and forgets its isomorphism type entirely, as expected.

\section*{Acknowledgements}

The author warmly thanks Professor Jaya Iyer for her sustained encouragement and generous financial support, which created the space for this project to develop. Deep gratitude is also owed to Professor Vijay Kodiyalam for many insightful conversations and his patient guidance throughout the course of this work.


\begin{thebibliography}{99}

\bibitem{Jones2021}
V.~F.~R.~Jones,
\newblock {\em Planar algebras, I},
\newblock N.\ Z.\ J.\ Math.\ 52 (2021), 1--107.


\bibitem{De2016}
S.~De,
\newblock {\em Infinite iterated crossed products of hopf algebras, Drinfeld doubles and planar algebras},
\newblock PhD thesis, The Institute of Mathematical Sciences, 2016.

\bibitem{KodiyalamSunder2009}
V.~Kodiyalam and V.~S. Sunder,
\newblock {\em Guionnet-Jones-Shlyakhtenko subfactors associated to finite-dimensional Kac algebras},
\newblock J. Funct. Anal. 257 (2009), no.~2, 3930-3948.

\bibitem{Dykema1993a}
K.~J. Dykema,
\newblock {\em Hyperfinite factor von Neumann dimensions},
\newblock J. Funct. Anal. 112 (1993), no.~1, 30-60.

\bibitem{Dykema1993b}
K.~J. Dykema,
\newblock {\em Free products of hyperfinite von Neumann algebras and free dimension},
\newblock Duke Math. J. 69 (1993), no.~1, 97-119.

\bibitem{Dykema1994}
K.~J. Dykema,
\newblock {\em Interpolated free group factors},
\newblock Pacific J. Math. 163 (1994), no.~1, 123-135.

\bibitem{GuionnetJonesShlyakhtenko2010}
A.~Guionnet, V.~F.~R. Jones, and D.~Shlyakhtenko,
\newblock {\em Random matrices, free probability, planar algebras and subfactors},
\newblock In: Quanta of Maths, Clay Math. Proc., Vol.~11, Amer. Math. Soc., 2010, 201-239.

\bibitem{JayakumarKodiyalam2025}
R.~Jayakumar and V.~Kodiyalam,
\newblock {\em On a two-sided Guionnet-Jones-Shlyakhtenko construction and interpolated free group factors arising from finite groups},
\newblock Banach J. Math. Anal. 19 (2025), Article 12.
\newblock \href{https://doi.org/10.1007/s43037-024-00399-x}{https://doi.org/10.1007/s43037-024-00399-x}.

\bibitem{JonesShlyakhtenkoWalker2010}
V.~F.~R. Jones, D.~Shlyakhtenko, and K.~Walker,
\newblock {\em An orthogonal approach to the subfactor of a planar algebra},
\newblock Pacific J. Math. 246 (2010), no.~1, 187-197.

\bibitem{Landau2002}
Z.~Landau,
\newblock {\em Exchange relation planar algebras},
\newblock Geom. Dedicata 95 (2002), 183-225.

\bibitem{NicaSpeicher2006}
A.~Nica and R.~Speicher,
\newblock {\em Lectures on the Combinatorics of Free Probability},
\newblock Cambridge Univ. Press, 2006.

\bibitem{Radulescu1995}
F.~R\u{a}dulescu,
\newblock {\em Random matrices, amalgamated free products and subfactors of the von Neumann algebra of a free group, of noninteger index},
\newblock Invent. Math. 115 (1995), no.~2, 347-389.

\bibitem{VoiculescuDykemaNica1992}
D.~V. Voiculescu, K.~J. Dykema, and A.~Nica,
\newblock {\em Free Random Variables},
\newblock Amer. Math. Soc., 1992.

\end{thebibliography}
\end{document}